\numberwithin{equation}{section}
\theoremstyle{plain} 
\newtheorem{thm}[equation]{Theorem}
\newtheorem{lem}[equation]{Lemma}
\newtheorem{prop}[equation]{Proposition}
\theoremstyle{definition}
\theoremstyle{remark}
\newtheorem{rem}[equation]{Remark}
\newtheorem*{VariableNoNum}{{\VariableText}}
\newtheorem{Variable}[equation]{{\VariableText}}
\theoremstyle{definition}
\newtheorem*{VariableNoNumBold}{{\VariableText}}
\newtheorem{VariableBold}[equation]{{\VariableText}}
\newenvironment{titled}[1]
     {\def\VariableText{{#1}}\begin{VariableNoNum}}
     {\end{VariableNoNum}}
\newenvironment{numbered}[1]
     {\def\VariableText{{#1}}\begin{Variable}}
     {\end{Variable}}
\newlength{\asidelength}
\def\Changed/{\ifvmode\else\vadjust{%
\vbox to 0pt{\vskip -\baselineskip%
\hbox to 0pt{\hss\vrule height 0pt depth 1.2\baselineskip\hskip 1em}\vss}}\fi}
\def\CHanged{\ifvmode\else\vadjust{%
\vbox to 0pt{\vskip -\baselineskip%
\hbox to 0pt{\hss\vrule height 0pt depth 1.2\baselineskip\hskip 1em}\vss}}\fi}
\def\Math#1{\def\MathString{#1}\futurelet\MathDelim\MathChoose}
\def\MathChoose{\ifmmode\let\MathDo\MathString%
              \else\let\MathDo\MathSkip\fi%
              \MathDo}
\def\MathSkip{\ifx\MathDelim/\def\MathDo{$\MathString$\EatOne}%
              \else\def\MathDo{$\MathString$}\fi%
              \MathDo}
\def\Text#1{\def\TextString{#1}\futurelet\TextDelim\TextSkip}
\def\TextSkip{\ifx\TextDelim/\def\TextDo{\TextString\EatOne}%
              \else\let\TextDo\TextString\fi%
              \TextDo}
\def\EatOne#1{}
\def\SkipToEndScan#1\EndScan{}
\def\Scan#1#2#3{\ifx#1#2#3\expandafter\SkipToEndScan\fi\Scan#1}
\def\Upper#1{%
\Scan#1aAbBcCdDeEfFgGhHiIjJkKlLmMnNoOpPqQrRsStTuUvVwWxXyYzZ#1#1\EndScan}
\def\Phrase#1 #2/#3/#4=#5 #6/#7/#8.{%
\expandafter\edef\csname#2#3\endcsname{\noexpand\Text{#6#7}}
\expandafter\edef\csname\Upper#2#3\endcsname{\noexpand\Text{\Upper#6#7}}
\expandafter\edef\csname#1#2#3\endcsname{\noexpand\Text{#5 #6#7}}
\expandafter\edef\csname\Upper#1#2#3\endcsname{\noexpand\Text{\Upper#5 #6#7}}
\expandafter\edef\csname#2#4\endcsname{\noexpand\Text{#6#8}}
\expandafter\edef\csname\Upper#2#4\endcsname{\noexpand\Text{\Upper#6#8}}
}
\newcommand{\whatever}{\text{--}}
\newcommand{\op}{^{\text{op}}}
\newcommand{\Z}{\Math{\mathbb{Z}}}
\newcommand{\Fp}{\Math{\mathbb{F}_p}}
\newcommand{\Zp}{\Math{{\Z}\hat{_p}}}
\newcommand{\Zmodp}{\Math{\Z/p}}
\newcommand{\Tensor}{\otimes}
\newcommand{\RightArrow}[1]{\xrightarrow{#1}} 
\newcommand{\Hom}{\operatorname{Hom}}
\newcommand{\hhh}{\operatorname{h}\!}
\newcommand{\thhh}{\tilde{\operatorname{h}}}
\def\HomotopyOrbit#1on#2/{\ensuremath{#2_{\hhh#1}}}
\def\RedHomotopyOrbit#1on#2/{\ensuremath{#2_{\thhh#1}}}
\newcommand{\ring}{\Math{R}}
\newcommand{\dga}{\Math{A}}
\newcommand{\dgab}{\Math{B}}
\newcommand{\dgm}{\Math{X}}
\newcommand{\dgmb}{\Math{Y}}
\newcommand{\dgmc}{\Math{Z}}
\newcommand{\Br}[1]{\Math{\beta_{{#1}}}}
\newcommand{\BrOf}[2]{\Math{\beta_{{#1}}(#2)}}
\newcommand{\PrOf}[2]{\Math{P_{{#1}}(#2)}}
\newcommand{\Bp}{\Br{\Fp}}
\newcommand{\dvr}{\Math{\mathcal O}}
\newcommand{\ideal}{\Math{\mathfrak m}}
\newcommand{\pwr}[1]{{[}{[}#1{]}{]}}
\newcommand{\poly}[1]{{[}#1{]}}
\newcommand{\unif}{\Math{\pi}}
\newcommand{\pzero}{\Math{(\Z/p,0)}}
\newcommand{\EE}{\Math{\mathcal{E}}}
\newcommand{\End}{\operatorname{End}}
\newcommand{\Ext}{\operatorname{Ext}}
\newcommand{\LL}{\Math{\Lambda}}
\newcommand{\Gr}{\operatorname{Gr}}
\newcommand{\ang}[1]{\langle#1\rangle}
\newcommand{\mat}[4]{\left( 
              \begin{array}{cc}
               #1&#2\\
               #3&#4
              \end{array}
                    \right)}
\newcommand{\FF}{\Math{\mathcal F}}
\newcommand{\NN}{\Math{N}}
\newcommand{\DD}{\Math{D}}
\newcommand{\CC}{J}
\newcommand{\hocolim}{\operatorname{hocolim}}
\renewcommand{\op}{^{\operatorname{op}}}
\begin{document}
\title[Exterior homology]{DG algebras with exterior homology}
\author {W. G. Dwyer, J. P. C. Greenlees and S. B. Iyengar}

\address{Department of Mathematics, University of Notre Dame, Notre
  Dame, Indiana 46556, USA} 
\email{dwyer.1@nd.edu}
\address{Department of Pure Mathematics, Hicks Building, Sheffield S3
  7RH, UK}
\email{j.greenlees@sheffield.ac.uk}
\address{Department of Mathematics, University of Nebraska,
  Lincoln, NE 68588, USA}
\email{iyengar@math.unl.edu}
\date{\today}
\thanks{ WGD was partially supported by NSF grant DMS~0967061, JPCG by
  EPSRC grant EP/HP40692/1, and SBI 
  by NSF grants DMS~0903493 and DMS~1201889.}
\begin{abstract}
We study  differential graded
algebras (\DGAs) whose homology is an  exterior algebra over a
commutative ring $R$ on a generator of degree $n$, and also certain
types of differential modules over these \DGAs. We obtain a complete
classification with $R=\Z$ or $R=\Fp$ and $n\ge-1$. The
examples are unexpectedly interesting.
\end{abstract}

\maketitle

\section{Introduction}\label{CIntro}
We mainly study \DGAs/ whose homology is an exterior algebra
over either \Fp/ or \Z/ on a class of degree~$-1$, as well as (left) modules
over these \DGAs/ whose homology
is \Zmodp/ in degree~0. In the case of an exterior algebra over \Fp, we find
many \DGAs/, each having one such module; in the case of an exterior
algebra over \Z, one \DGA/
having many such modules. In both cases, the enumeration of
possibilities involves complete discrete valuation rings with
residue field~\Fp. There are remarks about other types of \DGAs/ in
\S\ref{COther}. 

In more detail, a \emph{differential graded algebra} (\DGA) is a chain complex $\dga$ of
abelian groups together with a
multiplication map $\dga\Tensor \dga\to \dga$ which is both unital and
associative.   A morphism $f\colon \dga\to \dgab$ of \DGAs/ is a map
of chain complexes which respects  multiplication and  unit; $f$~is said to be \emph{\anequivalence/} if it induces isomorphisms $H_i\dga\cong H_i\dgab$,
$i\in\Z$. The homology $H_*\dga$ is a graded ring, and
$\dga$ is said to be \emph{of type \Br S/}, for $S$ a
commutative ring,  if $H_*\dga$ is an
exterior algebra over $S$ on a class of degree~$-1$.  
The notation  \Br S/ is meant to suggest that \dga/
captures a type of Bockstein operation. In topology, the ``Bockstein'' is
a operation $\beta$ on  mod~$p$ homology, of degree~$-1$ and square~0, which
arises from a generator of $\Ext^1_{\Z}(\Z/p,\Z/p)$.

A \emph{module}
over $\dga$ is a chain complex $\dgm$ together with an action map $\dga\Tensor
\dgm\to \dgm$ with the usual unital and associativity
properties. Morphisms and equivalences between modules are defined
in the evident way.
The module $\dgm$ is of \emph{type $(M,0)$} for an abelian group
$M$ if there are isomorphisms of abelian groups
\[
    H_i\dgm\cong\begin{cases} M & i=0\\
                           0&\text{otherwise}
             \end{cases}
\]

We are particularly interested in \DGAs/ of type $\Br S$ for $S=\Fp$
or $S=\Z$, and modules over these \DGAs/ of type $\pzero$.

\begin{thm}\label{Modpcase}
 There is a natural bijection
 between 
 \begin{itemize}
  \item equivalence classes of \DGAs/ of type $\Bp$, and 
  \item isomorphism classes of complete
 discrete valuation rings with residue  field~$\Fp$.
 \end{itemize}  Up to
 equivalence, each such \DGA/ has a unique module of type $\pzero$.
\end{thm}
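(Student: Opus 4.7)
The plan is a Koszul-duality-style correspondence: pass between $\dga$ and $\dvr$ by taking derived endomorphism \DGAs/ of suitable simple modules. In one direction, given a complete DVR $\dvr$ with residue field $\Fp$ and uniformizer $\unif$, form $\dga_\dvr := \operatorname{REnd}_\dvr(\Fp)$, computed via the two-step projective resolution $0 \to \dvr \xrightarrow{\unif} \dvr \to \Fp \to 0$. A direct calculation shows $\HH_*\dga_\dvr \cong \LL_{\Fp}(\beta)$ with $|\beta|=-1$, so $\dga_\dvr$ is of type $\Bp$, and $\Fp$ is tautologically a module over it of type $\pzero$.

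In the reverse direction, starting with $\dga$ of type $\Bp$, construct an $\dga$-module $\dgm$ of type $\pzero$ inductively: begin with $\dga$ itself, which has an unwanted class $\beta \in \HH_{-1}\dga$, kill it by attaching a free cell along a cycle representing $\beta$, and iterate on the resulting module (whose homology again acquires a class in negative degree from multiplication by $\beta$). The homotopy colimit of this tower has $\Fp$ in degree $0$ and vanishes elsewhere; define $\dvr_\dga := \HH_0 \operatorname{REnd}_\dga(\dgm)\op$.

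The central computation --- and the principal obstacle --- is to prove that $\operatorname{REnd}_\dga(\dgm)$ has homology concentrated in degree $0$ and that this homology is a complete DVR with residue field $\Fp$. The cellular construction of $\dgm$ equips $\operatorname{REnd}_\dga(\dgm)$ with a convergent Eilenberg--Moore-type spectral sequence of $E_2$-page $\Ext^{*,*}_{\HH_*\dga}(\Fp,\Fp) \cong \Fp[\unif]$, polynomial on a class $\unif$ whose cohomological and internal degrees are arranged to contribute to total homological degree $0$. All entries then sit on a single diagonal, and the multiplicative structure forces the limit to be a $\unif$-adically complete local ring with residue field $\Fp$ and principal maximal ideal $(\unif)$, i.e. a complete DVR.

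Finally, to see the two constructions are mutually inverse, apply a derived Morita / double centralizer argument: in $\DD(\dvr)$ the object $\Fp$ is a compact generator of the subcategory it generates, giving $\operatorname{REnd}_{\dga_\dvr}(\Fp) \simeq \dvr$; conversely, $\dgm$ is by construction a compact generator of the analogous subcategory of $\DD(\dga)$, giving $\dga \simeq \operatorname{REnd}_{\dvr_\dga}(\Fp)$. The same correspondence reduces uniqueness of $\dgm$ over $\dga$ to the uniqueness of $\Fp$ as a simple $\dvr$-module.
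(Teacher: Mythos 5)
Your overall architecture coincides with the paper's: the module $\dgm$ is built by the same cell-attachment process, both passages between DGAs and rings are derived endomorphism constructions, and the central computation is the collapsing Eilenberg--Moore spectral sequence with $E_2\cong\Ext_{H_*\dga}(\Fp,\Fp)\cong\Fp[t]$ concentrated in total degree~$0$. The genuine gap is in your last step. The claim that $\dgm$ ``is by construction a compact generator'' of the relevant subcategory of the derived category of \dga/ is false: the triangle $\dga\to\dgm\xrightarrow{\unif}\dgm$ places \dga/ in the thick subcategory generated by \dgm, so the localizing subcategory generated by \dgm/ is the whole derived category, where compact means perfect; and if \dgm/ were perfect, then $\Hom_{\dga}(\dgm,\dgm)$ would be finitely built from $\Hom_{\dga}(\dga,\dgm)\simeq\dgm$, forcing all its homology groups to be finite --- contradicting your own conclusion that $H_0\End_{\dga}(\dgm)$ is a complete discrete valuation ring. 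Hence the formal derived Morita/double-centralizer argument does not apply, and the assertion $\dga\simeq\End_{\EE}(\dgm)$ for $\EE=\End_{\dga}(\dgm)$ is precisely the point requiring work. The paper proves it using proxy-smallness (the triangle above) together with either \cite[pf.~of~4.10]{Duality} or a direct computation: applying $\Hom_{\dga}(\whatever,\dgm)$ to the first cell attachment yields an extension of $\Zmodp$ by $\Zmodp$ isomorphic to $\dvr/(\unif^2)$, hence nonsplit, which shows that the map $H_{-1}\dga\to H_{-1}\End_{\EE}(\dgm)$ is nonzero, and this suffices since both sides are of type \Bp. On the other side, compactness of $\Fp$ over \dvr/ is true but is not what makes $\dvr\to\End_{\End_{\dvr}(\Fp)}(\Fp)$ an equivalence: that double-centralizer identifies with derived completion along the maximal ideal, so the same formal reasoning applied to $\Z$ would ``prove'' $\Z\simeq\End_{\End_{\Z}(\Z/p)}(\Z/p)$, whose right-hand side is $\Zp$. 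You must invoke completeness of \dvr/ (the paper cites \cite[4.20]{Duality}); you have the hypothesis, but your argument never uses it.

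Two smaller compressions: the inference ``complete local ring with residue field $\Fp$ and principal maximal ideal, i.e.\ a complete DVR'' is not valid as stated ($\Z/p^2$ satisfies the hypothesis), and commutativity of the abutment is not automatic; both must be extracted from the complete filtration on $H_0\EE$ with associated graded $\Fp[t]$ (surjectivity of $\Z[s]\to H_0\EE/\ideal_k$ gives commutativity, non-nilpotence of \unif/ plus Bourbaki gives a noetherian domain, and Serre's criterion then gives the DVR property), which is how the paper argues. Finally, you defer uniqueness of \dgm/ to the Morita correspondence, but the correspondence is built from a chosen \dgm, so well-definedness of $\dvr_{\dga}$ already needs uniqueness; the paper's direct argument is short: any map $\dga\to\dgmb$ inducing an isomorphism on $H_0$ extends over each cell attachment because the obstruction lies in $H_{-1}\dgmb=0$, giving an equivalence $\dgm\to\dgmb$ for every module \dgmb/ of type \pzero.
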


\begin{rem}
  A \emph{complete discrete valuation ring} \dvr/ is a principal ideal domain
  with a unique nonzero prime ideal \ideal/, such that $\dvr$ is
  complete with respect to the topology determined by powers
  of~\ideal. If $\dvr/\ideal\cong\Fp$, then $\dvr$ is isomorphic
  either to $\Fp\pwr t$  or to a totally ramified
extension of finite degree of the ring \Zp/ of $p$-adic integers; see \cite[II.4-5]{Serre}.
The surprise in \ref{Modpcase} is the sheer profusion of
\DGAs. In \ref{ModelForDGA} below there is an explicit description of
how to pass from a ring \dvr/ to a \DGA.
\end{rem}

The tables are turned when it comes to \DGAs/ of type $\Br \Z$.

\begin{thm}\label{ModZcase}
  Up to equivalence there is only a single
  \DGA/ \dga/ of type $\Br \Z$. There is a natural
  bijection between
  \begin{itemize}
  \item equivalence classes of modules over $\dga$ of type $\pzero$, and
  \item isomorphism classes of pairs $(\dvr,\unif)$, where $\dvr$ is a
    complete discrete valuation ring with residue field $\Fp$,
    and $\unif$ is a uniformizer for $\dvr$.
  \end{itemize}
\end{thm}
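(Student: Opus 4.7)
The plan is to prove the theorem in two stages: first uniqueness of the \DGA/ $\dga$ of type \Br{\Z}, then classification of its modules of type $\pzero$ by means of Theorem \ref{Modpcase}. For uniqueness, I exhibit the formal model $\dga_0 = \Z \oplus \Z\cdot x$ placed in degrees $0$ and $-1$, with zero differential and $x^2=0$; this is manifestly of type \Br{\Z}, so the task reduces to showing that every \DGA/ of type \Br{\Z} is equivalent to $\dga_0$. My approach is to establish the \emph{intrinsic formality} of $\Lambda_\Z(x)$ as a graded $\Z$-algebra: set up the standard Hochschild obstruction theory for \DGA/ structures with this fixed homology, identify the obstructions with classes in appropriate Hochschild cohomology groups, and observe that these vanish because $\Ext^i_\Z(\Z,\Z)=0$ for $i>0$. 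This is precisely where the behavior over $\Z$ diverges from that over $\Fp$, where the analogous obstruction groups are nonzero and are ultimately responsible for the profusion of examples recorded in Theorem \ref{Modpcase}.

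For the forward direction of the bijection, given a module $\dgm$ of type $\pzero$ over $\dga$, I would form the derived endomorphism \DGA/ $\EE = R\End_\dga(\dgm)$; since $\dga$ is graded commutative, left multiplication provides a canonical unit map $\dga \to \EE$. A direct computation of $H_*\EE$ via a small semifree resolution of $\dgm$, built from the two-step $\Z$-free resolution of $\Z/p$, should identify $\EE$ as a \DGA/ of type $\Bp$. Theorem \ref{Modpcase} then assigns to $\EE$ a complete discrete valuation ring $\dvr$ with residue field $\Fp$, and the image of the generator $x \in H_{-1}\dga$ in $H_{-1}\EE$ corresponds, via the explicit dictionary recorded in \ref{ModelForDGA}, to an element $\unif \in \dvr$ which one verifies to be a uniformizer.

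For the inverse map, given a pair $(\dvr,\unif)$, let $\dgab$ be the \DGA/ of type $\Bp$ associated to $\dvr$ by Theorem \ref{Modpcase}, let $\dgm_0$ be its essentially unique module of type $\pzero$, and use the uniformizer $\unif$ to construct a \DGA/ map $\dga \to \dgab$ sending $x$ to the distinguished class corresponding to $\unif$; pulling $\dgm_0$ back along this map produces a $\dga$-module of type $\pzero$. To confirm that the two assignments are mutually inverse, I would identify, via the uniqueness clause of Theorem \ref{Modpcase}, the derived endomorphism \DGA/ of the pullback with $\dgab$, and then observe tautologically that the lift of $x$ reproduces $\unif$. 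I expect the chief obstacle to be the intrinsic formality step, where the obstruction theory must be controlled carefully over the base ring $\Z$ rather than a field; a secondary and more computational point is the explicit identification of the class of $x$ in $H_{-1}\EE$ with a uniformizer of $\dvr$, which depends on unpacking the model construction attached to a complete \dvr/ in Theorem \ref{Modpcase}.
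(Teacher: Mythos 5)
Your treatment of the module classification contains a genuine error at the point where everything has to happen. You send a module $\dgm$ of type $\pzero$ to $\EE=\End_{\dga}(\dgm)$, assert that $\EE$ is a \DGA/ of type $\Bp$, and then invoke \ref{Modpcase}. But $\EE$ is not of type $\Bp$. Since $\dga\sim\FF$ has $H_*\dga\cong\Lambda_{\Z}(x)$ with $x$ in degree $-1$, the Eilenberg--Moore spectral sequence of \ref{ExplainEM} has $E^2\cong\Ext_{\Lambda_{\Z}(x)}(\Z/p,\Z/p)\cong\Fp[u]\otimes\Lambda_{\Fp}(\beta)$ with $u$ in total degree $0$ and $\beta$ in total degree $-1$, so a whole polynomial tower accumulates in degree $0$; and indeed, transporting across the duality of \ref{ConvertToR} identifies $\End_{\dga}(\dgm)$ with the derived endomorphisms over $\Z[s]$ of the corresponding torsion module $\DD$, whose degree-zero homology is $\Ext^0_{\Z[s]}(\DD,\DD)\cong\dvr$ by \ref{StartWithPair} --- for the module attached to $(\Zp,p)$ this is $\Zp$, not $\Fp$ (and $H_{-1}\EE$ is likewise not $\Z/p$). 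So \ref{Modpcase} cannot be applied to $\EE$. Worse, the datum from which you propose to read off the uniformizer, the image of $x$ under $H_{-1}\dga\to H_{-1}\EE$, is a single homology class and cannot distinguish $(\Zp,p)$ from $(\Zp,up)$ for a unit $u$, although these are non-isomorphic pairs that must index inequivalent modules. The uniformizer lives in finer homotopical data, and recovering it is exactly what \S\ref{CManyModules} does: via \ref{ConvertToR} a module of type $\pzero$ is traded for an $s$-torsion complex over $\Z[s]$, shown to be an ordinary module $\DD$ with $s$ onto, $\ker(s)\cong\Z/p$ and $\DD$ exhausted by $\ker(s^k)$, and then $(\dvr,\unif)=(\Ext^0_{\Z[s]}(\DD,\DD),s)$ with $\DD\cong\dvr/\unif^{\infty}$ (\ref{StartWithPair}, \ref{StartWithDVR}). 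Your inverse construction (restricting the unique type-$\pzero$ module of $\End_{\dvr}(\Z/p)$ along a \DGA/ map $\FF\to\End_{\dvr}(\Z/p)$ sending $x$ to $\unif U$ in the model of \ref{ModelForDGA}) is a plausible candidate, but with the forward map broken nothing in your outline establishes a bijection.

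On the uniqueness of $\dga$, your route genuinely differs from the paper, which avoids obstruction theory altogether: it runs the machine of \S\ref{CSemiCanon} on a module of type $(\Z,0)$, shows its endomorphism \DGA/ has homology $\Z\pwr s$ concentrated in degree $0$, and gets $\dga\sim\End_{\Z\pwr s}(\Z)\sim\FF$ by the double-centralizer argument of \S\ref{CUniqueOverZ}. An intrinsic-formality argument can be made to work, but not for the reason you give: the obstruction groups are Hochschild (Shukla) cohomology groups of $\Lambda_{\Z}(x)$ over $\Z$ with coefficients in shifts of itself, not $\Ext_{\Z}(\Z,\Z)$, and the full Hochschild cohomology of an exterior algebra is far from zero. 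What actually saves you is that $\Lambda_{\Z}(x)$ is free as a $\Z$-module, so a minimal $A_{\infty}$-model over $\Z$ exists (this is precisely what fails for $\Fp$-homology, where non-flatness of $\Z/p$ over $\Z$ is what produces the profusion in \ref{Modpcase}), combined with a degree count: any higher operation $m_n$, $n\ge 3$, evaluated on the reduced part takes $x^{\otimes n}$, of degree $-n$, to degree $-2$, where $\Lambda_{\Z}(x)$ vanishes. As written, ``the obstructions vanish because $\Ext^i_{\Z}(\Z,\Z)=0$'' is not an argument, and it sits at the crux of your first step.
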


\begin{rem}
A \emph{uniformizer} for \dvr/  is a generator of the maximal
  ideal~\ideal. 
  The surprise in \ref{ModZcase} is  the profusion of
  modules, since there are many pairs $(\dvr,\unif)$ as above. For
  instance, if $\pi\in\Zp$ is divisible by $p$ but not by $p^2$, then
  $(\Zp,\pi)$ is such a pair. Since \Zp/ has no nontrivial
  automorphisms, these
  pairs are distinct for different choices of $\pi$. 

  The object \dga/ of \ref{ModZcase}  can be taken to be the \DGA/ \FF/
which contains copies of $\Z$ in degrees~0 and $-1$, is trivial
elsewhere, and has zero differential.  In \ref{ChainComplexFromModule} below there is
  an explicit description of how to pass from a pair $(\dvr,\unif)$ to
  a module over \FF. 
\end{rem}

\begin{rem}\label{SometimesUnique}
  If \dga/ is a \emph{connective} \DGA, i.e., $H_i\dga=0$ for $i<0$, and
  $\dgm$ is a module over \dga/ such that $H_i\dgm=0$ for $i\ne 0$,
  then $\dgm$ is determined up to equivalence by the isomorphism class
  of $H_0\dgm$ as an (ordinary) module over $H_0\dga$. See for
  instance \cite[3.9]{Duality}. This is in strong contrast to what
  happens in the non-connective setting of~\ref{ModZcase}.
\end{rem}

\begin{titled}{Generalizations} We have some remarks in \S\ref{COther}
  about \DGAs/ with other types of exterior algebra homology.

  The arguments below can be interpreted in the setting of stable
  homotopy theory, and they lead to a classification of associative
  ring spectra of type \Br \Z/ or \Bp/ (appropriately interpreted) and of
  module spectra over these ring spectra of type \pzero. No new
  examples come up; all of these ring spectra and module spectra are
  obtained in a standard way \cite{ShipleyHZ}  from \DGAs. 
\end{titled}

\begin{numbered}{DG-objects, equivalences, and formality}\label{Formality}
As suggested above, a map between differential graded (DG) objects of
any kind is said to be an \emph{equivalence} if it induces an
isomorphism on homology groups. Two objects are \emph{equivalent} if
they are related by a zig--zag
$\leftarrow\rightarrow\leftarrow\cdots\rightarrow$ of equivalences. A
DG-object $X$ is \emph{formal} if it is equivalent to a DG-object $Y$ of
the same kind which has zero differentials. Of course, in this case
the graded constituent $Y_i$ of $Y$ must be isomorphic to $H_iX$. For
instance (cf. \ref{SometimesUnique}), if \dga/ is a connective \DGA/
and $X$ is a module over~\dga/ such that $H_iX$ vanishes except for a
single value of~$i$, then $X$ is formal as an \dga-module. 

If \dga/ is a \DGA/ such that $H_i\dga=0$ for $i\ne 0$,
then \dga/ is formal as a \DGA/. To see this, let $\dga'\subset\dga$ be the
subcomplex given by
\[
\dga'_i=\begin{cases} \dga_i & i>0\\
                       \ker(\partial\colon\dga_0\to\dga_{-1}) & i=0\\
                       0 &i<0
        \end{cases}
\]
Then $\dga'$ is a \DGA/, and there is a zig-zag of \DGA-equivalences
\[
   \xymatrix@1{\dga &\dga'\ar[r]^\sim\ar[l]_\sim & H_0\dga}
\]
where the object on the right is a ring, treated as a \DGA/
concentrated in degree~0.
 
 If  \dga/ is a \DGA/ such that $H_*\dga$ is a polynomial
  algebra $\Z[x]$ on a class $x$ of (arbitrary) degree~$n$, then \dga/
  is also formal: choose a cycle
  $\chi\in\dga_n$ representing~$x$ and construct an equivalence  $(\Z[x],0)\to(\dga,\partial)$  by $x\mapsto \chi$. On the other hand, if $S$ is a commutative
  ring other than \Z/ and $H_*\dga\cong S[x]$, it is not necessarily
  the case that \dga/ is formal, even for $S=\Fp$ (see \S\ref{COther}
  for examples); the above argument
  applies only if \dga/ itself is an algebra over~$S$, or at
  least equivalent to an algebra over~$S$. 
\end{numbered}

\begin{numbered}{Relationship to Moore-Koszul duality}\label{MooreKoszul}
  Suppose that $k$ is some chosen field. Say that a \DGA/
  \dga/ over $k$ is \emph{admissible} if  $H_i\dga$ is finite-dimensional over~$k$
  for all $i$ and $H_0\dga\cong k$. Applying duality over~$k$ to the
  bar constructions in \cite{rMoore} produces bijections between
  equivalence classes of the  admissible \DGAs/ indicated below.
\[
\xymatrix@1@C=4pc{\txt<13pc>{%
        \{$\dga\vert H_i\dga=0$, $i>0$ \& $i=-1$\}}\quad \ar@<0.5ex>[r]^-{\dga\mapsto\End_{\dga}(k)} &\txt<10pc>{\{%
      $\dgab\vert H_i\dgab\cong 0$, $i<0$\}}
      \quad\ar@<1.5ex>[l]^-{\dgab\mapsto\End_{\dgab}(k)}}
\]
In topology, for instance, this gives the relationship between the
cochain algebra \dga/ of a simply-connected space $X$ and the chain algebra \dgab/ of
the loop space $\Omega X$.

Our technique is to push the boundaries of the above  Moore-Koszul duality 
construction. In proving \ref{Modpcase}
(\S\ref{CSemiCanon}-\ref{CExtOverFp}),  we start with a \DGA/ \dga/ of type \Bp, more or
less construct by hand an action of \dga/ on (something equivalent to)
$\Z/p$, and show that the \DGA/ $\dgab=\End_{\dga}(\Z/p)$ has its
homology concentrated in degree~0, and so is essentially an ordinary
ring \dvr/ (\ref{Formality}). It turns out that  \dvr/ is a complete discrete valuation
ring, and that it determines \dga/
up to equivalence via the formula
\[
        \xymatrix@1{\dga\ar[r]^-\sim&\End_{\dvr}(\Z/p)}\,.
\]
        Note that it is \emph{not} necessarily the case
that \dga/ is a \DGA/ over~\Fp. In \S\ref{CUniqueOverZ}, the same technique
succeeds in classifying \DGAs/ of type \Br\Z/. The proof in
\S\ref{CManyModules} of the second part   of \ref{ModZcase} relies on
ideas from~\cite{CompleteTorsion} which fit modules
into a type of Moore-Koszul duality setting.
\end{numbered}

\begin{numbered}{Notation and terminology}\label{Notation}
We work in the context of \cite{CompleteTorsion} and
\cite{Duality}. Rings are tacitly identified with \DGAs/ concentrated
in degree~0 (\ref{Formality}). An ordinary module $M$ over a ring $R$ is similarly
tacitly identified with the chain complex $X$ over $R$ with $X_0=M$
and $X_i=0$ for $i\ne0$. $\Hom$ is the derived homomorphism
complex and $\Tensor$ is the derived tensor product. If \dgm/ is a
module over the \DGA/ (or ring) \dga, then  $\End_{\dga}(\dgm)$
denotes the  \DGA/ obtained by taking a projective model for \dgm/
  and forming the usual \DGA/ of endomorphisms of this model.  See
  \cite[2.7.4]{rWeibelBook}, but re-index to conform to our convention that
  differentials always reduce degree by one.  (Up to
  equivalence, the \DGA/ $\End_{\dga}(X)$ depends only on the
  equivalence types of \dga/ and of $X$; this can be proved for
  instance with the bimodule argument of \cite[3.7.6]{rSchwedeShipleyModules}.)
If \ring/ is a ring and $M$ is an ordinary $R$-module, there are
isomorphisms
\[
       H_i\End_R(M)\cong \Ext^{-i}_R(M,M)\,.
\]
We write $\Ext_0^R(M,M)$ or $H_0\End_R(M)$ for the ordinary endomorphism
ring of $M$ over $R$.

If $X$ is a chain complex or graded abelian group, we write
$\Sigma^iX$ for its $i$-fold shift: $(\Sigma^i X)_j = X_{j-i}$,
$\partial(\Sigma^ix)=(-1)^i\Sigma^i(\partial x)$. For the sake of clarity, we attempt as much as reasonably possible to
make a notational distinction between the field $\Fp$ and the abelian
group $\Z/p=\Z/p\Z$.
\end{numbered}
\section{A module of type $(\mathbb Z/p,0)$ }\label{CSemiCanon}

In this section \dga/ is a \DGA/ of type $\Bp$. We construct a module
\dgm/ over \dga/ of type $\pzero$ and show that for any module
$\dgmb$ over \dga/ of type $\pzero$ there is \anequivalence/
$e_{\dgmb}:\dgm\to\dgmb$. However, the \equivalence/ $e_{\dgmb}$ is not
unique in any sense, even up to homotopy. See \cite[3.3, 3.9]{Duality}
for more general constructions of this type.

The construction is inductive. Suppose that \dgmc/ is a module over
\dga/ with 
\begin{equation}\label{SpecialModule}
  H_i\dgmc \cong\begin{cases} \Z/p & i=0, -1\\
                                0 & \text{otherwise}
                 \end{cases}
\end{equation}
Choose a map $\kappa:\Sigma^{-1}\dga\to\dgmc$ which induces an isomorphism
\[
   \Z/p\cong H_0\dga \cong H_{-1}\Sigma^{-1}\dga\RightArrow{H_{-1}\kappa} H_{-1}\dgmc
\]
and let $\CC\dgmc$ be the mapping cone of $\kappa$. There is a natural
map $\dgmc\to \CC\dgmc$ which is an isomorphism on $H_0$ and trivial on
the other homology groups (in particular $H_{-1}\dgmc\to
H_{-1}\CC\dgmc$ is zero), and so the homology exact sequence of the
triangle
\[
      \Sigma^{-1}A\to\dgmc\to \CC\dgmc
\]
shows that $H_*\CC\dgmc$ again  vanishes except for copies of $\Zmodp$ in
degrees $0$ and $-1$.  Starting with $\dgmc=\dga$, iterate the process
to obtain a sequence
\[
   \dga\to \CC\dga\to \CC^2\dga\to\cdots
\]
of chain complexes and chain maps, and let
$\dgm=\hocolim_k\CC^k\dga$. (In this case the homotopy colimit can be
taken to be a colimit, i.e., the ascending union.) It is immediate that $X$ has type
$\pzero$.

Suppose that $\dgmb$ is an arbitrary module of type \pzero. There is
certainly a map $\dga\to\dgmb$ which induces an isomorphism on $H_0$,
so to construct \anequivalence/ $\dgm\to\dgmb$ it is enough to show
that if \dgmc/ satisfies \ref{SpecialModule} and $f\colon\dgmc\to \dgmb$
induces an isomorphism on $H_0$, then $f$ extends to $f'\colon
\CC\dgmc\to\dgmb$.  (By induction, this will guarantee that the map $\dga\to\dgmb$
extends to a map $\dgm=\hocolim_k\CC^k\dga\to\dgmb$.) The map $f$ extends to $f'$ if and only if the composite
\[
    \xymatrix@1{ {\hbox{$\Sigma^{-1}\dga$}} \ar[r]^-{\kappa}& \dgmc\ar[r]^{f} &\dgmb}
\]
is null homotopic. But the group of homotopy classes of \dga-module
maps $\Sigma^{-1}\dga\to\dgmb$ vanishes, since it is isomorphic to
$H_{-1}\dgmb$.

\section{Exterior algebras over $\Fp$}\label{CExtOverFp}

In this section we prove \ref{Modpcase}. The proof depends on two lemmas.

\begin{titled}{Obtaining a ring from a \DGA/} 
Suppose that \dga/ is a \DGA/
of type $\Bp$. According to \S\ref{CSemiCanon}, up to equivalence there is a unique
module \dgm/ over \dga/ of type \pzero. Let $\EE=\End_{\dga}(\dgm)$ be
the derived endomorphism algebra of $\dgm$.

\begin{lem}\label{DGAtoRing} Let \dga, \dgm, and \EE/ be as above.
  \begin{enumerate}
  \item The homology group $H_i\EE$ vanishes for $i\ne0$, and the ring
  $H_0\EE$ is a complete discrete valuation ring with residue
  field $\Fp$.
  \item The natural  map $\dga\to\End_{\EE}(X)$ is \anequivalence.
 \end{enumerate}
\end{lem}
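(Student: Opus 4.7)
The plan is to exploit the explicit construction of $\dgm$ from Section~\ref{CSemiCanon}. Writing $\dgm_0 = \dga$ and $\dgm_k = \CC^k\dga$, we have $\dgm = \hocolim_k \dgm_k$ with cofibre triangles $\Sigma^{-1}\dga \to \dgm_{k-1} \to \dgm_k$ (cofibre $\dga$). Set $\EE_k = \Hom_\dga(\dgm_k,\dgm)$; applying $\Hom_\dga(-,\dgm)$ to each such triangle and using $\Hom_\dga(\dga,\dgm) = \dgm$ and $\Hom_\dga(\Sigma^{-1}\dga,\dgm) = \Sigma\dgm$ produces triangles $\dgm \to \EE_k \to \EE_{k-1}$. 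Because $H_*\dgm$ is concentrated in degree~$0$, the associated long exact sequences show by induction on~$k$ that $H_i\EE_k = 0$ for $i\ne 0$ and that $H_0\EE_k$ sits in a short exact sequence $0\to\Z/p\to H_0\EE_k\to H_0\EE_{k-1}\to 0$; hence $|H_0\EE_k| = p^{k+1}$.

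Since $\EE$ is the homotopy inverse limit of the tower $\{\EE_k\}$ and $H_1\EE_k = 0$, the Milnor $\lim^1$ sequence yields $H_i\EE = 0$ for $i\ne 0$ together with $H_0\EE \cong \lim_k H_0\EE_k$, a profinite pro-$p$ ring. The action of $H_0\EE$ on $H_0\dgm = \Z/p$ gives a surjective ring map to $\End(\Z/p) = \Fp$; call its kernel~$\ideal$. The next step is to identify $\ideal^{k+1}$ with the kernel of $H_0\EE \twoheadrightarrow H_0\EE_k$, which forces $H_0\EE$ to be its own $\ideal$-adic completion and gives $\ideal/\ideal^2 \cong \Z/p$.

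Once this is in place, Nakayama shows that any lift $\unif\in\ideal$ of a generator of $\ideal/\ideal^2$ generates~$\ideal$, so $H_0\EE$ is a complete Noetherian local ring with principal maximal ideal and residue field~$\Fp$; being infinite, it has Krull dimension~$1$, and so is a complete discrete valuation ring. Commutativity is automatic since the associated graded with respect to $\ideal$ is the polynomial ring $\Fp[\unif]$. For part~(2), the concentration of $H_*\EE$ in degree~$0$ together with \ref{Formality} gives an equivalence $\EE\simeq\dvr$ for $\dvr := H_0\EE$. Viewing $\dgm$ as a $\dvr$-module, $H_0\dgm = \dvr/\ideal$, so \ref{SometimesUnique} provides an equivalence $\dgm\simeq\Z/p$ of $\dvr$-modules. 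Then $\End_\EE(\dgm)\simeq\End_\dvr(\Z/p)$, whose homology is computed from the two-term resolution $\dvr\xrightarrow{\unif}\dvr\to\Z/p$ to be $\Fp$ in degrees $0$ and $-1$ -- matching $H_*\dga$. The natural map $\dga\to\End_\EE(\dgm)$ is thus a map between \DGAs/ of type $\Bp$; it is an isomorphism on $H_0$ because both sides act unitally on $\Z/p$, and on $H_{-1}$ because the two sides are one-dimensional $\Fp$-vector spaces and the image of the Bockstein generator corresponds under the identification to the nontrivial $\Ext^1$ class arising from the resolution.

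The main obstacle will be verifying the identity $\ideal^{k+1} = \ker(H_0\EE\to H_0\EE_k)$, which underlies both $\ideal$-adic completeness and principality of~$\ideal$ and does not follow automatically from the surjective tower. It requires a compatibility between the composition product on $\EE$ and the iterated cone construction of $\dgm$ -- essentially, that each extra application of $\CC$ corresponds to multiplication by one further power of~$\unif$.
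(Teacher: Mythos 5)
Your part (1) strategy (the tower $\EE_k=\Hom_{\dga}(\CC^k\dga,\dgm)$, the Milnor sequence, and the resulting pro-$p$ ring $H_0\EE\cong\lim_k H_0\EE_k$) is sound as far as it goes, and it is really the same filtration the paper uses: your tower is the cellular/Adams-type filtration underlying the Eilenberg--Moore spectral sequence of \ref{ExplainEM}. But the step you yourself flag --- identifying $\ideal^{k+1}$ with $\ker(H_0\EE\to H_0\EE_k)$, or more modestly establishing that the tower filtration is multiplicative with associated graded $\Fp[t]$ --- is exactly the content you are missing, and everything after it (completeness in the $\ideal$-adic topology, principality of $\ideal$, Noetherianness, domain) depends on it. This is not a routine verification from the surjective tower; in the paper it is supplied by the fact that the Eilenberg--Moore spectral sequence is a spectral sequence of bigraded \emph{algebras} whose $E^2=\Ext_{\LL}(\Zmodp,\Zmodp)$ is the polynomial algebra on the degree-$1$ extension class, concentrated on the collapse line, so that $H_0\EE$ comes equipped with a complete multiplicative filtration $\{\ideal_k\}$ with $\Gr\cong\Fp[t]$. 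Note also that the paper never needs your stronger identity $\ideal^{k+1}=\ideal_{k+1}$: it works directly with the filtration, proves commutativity via the surjections $\Z\poly s\to\dvr/\ideal_k$, gets ``noetherian domain'' from Bourbaki's criterion for complete filtered rings, shows $\unif$ generates $\ideal$ by passing to the limit of exact sequences of finite groups, and concludes with Serre's criterion. Separately, your assertion that ``commutativity is automatic since the associated graded is polynomial'' is false in general (the Weyl algebra with its order filtration has commutative associated graded); what saves the day here is that $\Gr$ is generated over the central prime ring by a single degree-one class, which is the argument via $\Z\poly s$ above.

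Part (2) also stops short at the decisive point. Your reduction is fine: granting (1) and formality, $\End_{\EE}(\dgm)\simeq\End_{\dvr}(\Zmodp)$ is again of type \Bp, and the map is automatically an isomorphism on $H_0$, so everything comes down to showing that the image of a generator of $H_{-1}\dga$ in $H_{-1}\End_{\EE}(\dgm)$ is \emph{nonzero}. You assert this (``the image of the Bockstein generator corresponds \dots to the nontrivial $\Ext^1$ class'') without argument, and it is not formal: a priori the map of type-\Bp\ \DGAs/ could kill $H_{-1}$. The paper's proof of (2) is devoted precisely to this point: applying $\Hom_{\dga}(\whatever,\dgm)$ to the cofibration diagram \ref{Cofibrations} and computing that $H_0\Hom_{\dga}(\CC\dga,\dgm)\cong\dvr/(\unif^2)$, a nonsplit extension of $\Zmodp$ by $\Zmodp$ over \dvr, which forces the class $a^*$ (left multiplication by the generator of $H_{-1}\dga$) to be nonzero in $H_{-1}\End_{\EE}(\dgm)$. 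Some argument of this kind (or an appeal to proxy-smallness and \cite[pf.~of~4.10]{Duality}) is needed to close your part (2).
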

\end{titled}

\begin{titled}{Obtaining a \DGA/ from a ring}
Suppose that \dvr/ is a complete discrete valuation ring with residue class
field \Fp. Let $X$ denote  $\Zmodp$ with the unique possible
\dvr-module structure, and let
$\dga=\End_{\dvr}(X)$ be the derived endomorphism algebra of $X$. 

\begin{lem}\label{RingtoDGA}
  Let \dvr, \dgm, and \dga/ be as above.
  \begin{enumerate}
  \item \dga/ is a \DGA/ of type \Bp.
  \item The natural map $\dvr\to\End_{\dga}(X)$ is an equivalence.
  \end{enumerate}
\end{lem}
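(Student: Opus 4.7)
The plan divides naturally between (1) and (2). For (1), I would model $\dga$ concretely as $\End_\dvr(P)$ where $P = [\,\dvr \xrightarrow{\pi}\dvr\,]$ is the standard length-$2$ projective resolution of $X$ over $\dvr$ (with $\pi$ a uniformizer). Since $\dvr$ is a discrete valuation ring its global dimension is at most $1$, so $H_i\dga \cong \Ext^{-i}_\dvr(\Fp,\Fp)$ vanishes outside degrees $0$ and $-1$ and equals $\Fp$ in each of those. The Yoneda product gives the algebra structure on $H_*\dga$; the square of any generator of $H_{-1}\dga \cong \Ext^1_\dvr(\Fp,\Fp)$ lies in $\Ext^2_\dvr(\Fp,\Fp) = 0$, so $H_*\dga$ is automatically the exterior algebra $\Br{\Fp}$ on a degree $-1$ generator. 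This gives (1).

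For (2), the natural map sends $r\in\dvr$ to the $\dga$-linear endomorphism ``multiply by $r$''; this is $\dga$-linear because $\dga$ acts on $X$ through $\dvr$-linear maps. To prove it an equivalence, I would exploit the explicit model from Section~\ref{CSemiCanon}: since $X$ is a module of type $\pzero$ over $\dga$, it is equivalent to $\hocolim_k \CC^k\dga$, whence
\[
    \End_\dga(X) \;\simeq\; \operatorname{holim}_k\, \Hom_\dga(\CC^k\dga, X).
\]
I would then argue by induction on $k$ that $\Hom_\dga(\CC^k\dga, X)$ has homology concentrated in degree $0$ and equal to $\dvr/\pi^{k+1}$, with tower transition maps the canonical surjections onto $\dvr/\pi^k$. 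The inductive step uses the cofiber sequence $\Sigma^{-1}\dga \to \CC^{k-1}\dga \to \CC^k\dga$ defining $\CC$: applying $\Hom_\dga(-,X)$ produces a fiber sequence whose fiber is $\Sigma X$, and the associated long exact sequence on homology has connecting map that should match multiplication by $\pi$. Passing to the homotopy limit then yields $H_0\End_\dga(X) = \lim_k \dvr/\pi^k = \dvr$ by completeness, with $\lim^1 = 0$ because the transitions are surjective; higher homology vanishes by the same inductive control.

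The main obstacle is verifying that the connecting map above really is multiplication by $\pi$. My plan is to trace the attaching map $\kappa\colon \Sigma^{-1}\dga \to \CC^{k-1}\dga$ back to the generator of $H_{-1}\dga \cong \Ext^1_\dvr(\Fp,\Fp) \cong \dvr/\pi$, which under the standard correspondence between $\Ext^1$ and extensions is represented by $0 \to \Fp \to \dvr/\pi^2 \to \Fp \to 0$, i.e.\ by multiplication by~$\pi$. Once this identification is in place the inductive description of the tower is routine and (2) follows.
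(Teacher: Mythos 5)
Your proof of part (1) is essentially identical to the paper's: compute $\Ext^{-i}_{\dvr}(\Zmodp,\Zmodp)$ from the length-one resolution $0\to\dvr\RightArrow{\unif}\dvr\to\Zmodp\to0$, with the exterior algebra structure automatic because $\Ext^2_{\dvr}$ vanishes. For part (2) you take a genuinely different route: the paper disposes of it in one line by citing \cite[4.20]{Duality} (applicable since the residue field $\Fp$ is regular), whereas you propose a direct computation of $\End_{\dga}(X)$ as the homotopy limit of $\Hom_{\dga}(\CC^k\dga,X)$ along the cell tower of \S\ref{CSemiCanon}. That strategy is reasonable --- it is essentially the proxy-smallness/completeness mechanism underlying the cited result, and it parallels the paper's own hands-on proof of \ref{DGAtoRing}(2) --- and the formal parts (homotopy limit, $\lim^1$ vanishing for a tower of surjections, completeness of $\dvr$) are fine.

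The gap is at exactly the decisive step, and your proposed fix does not cover it. The long exact sequence coming from $\Sigma^{-1}\dga\to\CC^{k-1}\dga\to\CC^k\dga$ contains no map that could literally be ``multiplication by $\unif$'': with the inductive hypothesis it degenerates to a short exact sequence of $\dvr$-modules $0\to\Zmodp\to H_0\Hom_{\dga}(\CC^k\dga,X)\to\dvr/\unif^k\to0$, and all the content sits in the extension class. Nothing formal prevents this extension from splitting, and if it splits at any stage the limit is no longer $\dvr$, so proving non-splitness is the whole battle. Your plan --- identify $\kappa$ with the generator of $H_{-1}\dga\cong\Ext^1_{\dvr}(\Fp,\Fp)$, i.e.\ with the extension $0\to\Fp\to\dvr/\unif^2\to\Fp\to0$ --- settles only the first stage $k=1$, where the target of $\kappa$ is $\dga$ itself and the cone can be computed from the explicit model of \ref{ModelForDGA}. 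For $k\ge2$ the attaching map lands in $\CC^{k-1}\dga$ and is determined only up to homotopy and a unit by its effect on $H_{-1}$, so an additional argument is needed to see that the resulting extension of $H_0$'s is again non-split; compare the care taken in the paper's proof of \ref{DGAtoRing}(2), which is the mirror-image of your $k=1$ step and already requires the comparison diagram \ref{Cofibrations}. To close the gap you could, for instance, show inductively that $\CC^k\dga\Tensor_{\dga}X\simeq\dvr/\unif^{k+1}$ as complexes of $\dvr$-modules and feed that back into your tower, or run the Eilenberg--Moore spectral sequence of \ref{ExplainEM} as in the proof of \ref{DGAtoRing}(1) to see that $H_0\End_{\dga}(X)$ is a complete discrete valuation ring and then verify that the natural map from $\dvr$ hits a uniformizer; alternatively, simply invoke \cite[4.20]{Duality} or the Morita framework of \cite{CompleteTorsion}, which is what the paper does. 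As written, the identification of the tower with $\{\dvr/\unif^{k+1}\}$, and hence part (2), is not yet proved.
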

\end{titled}

\begin{titled}{Proof of \ref{Modpcase}}
  The existence and uniqueness of the module of type \pzero/ is from
  \S\ref{CSemiCanon}. For the rest, \ref{Formality}, \ref{DGAtoRing} and \ref{RingtoDGA}
  provide inverse constructions matching up appropriate \DGAs/ with
  appropriate rings. \qed
\end{titled}

For minor efficiency reasons, we first prove \ref{RingtoDGA} and then \ref{DGAtoRing}.

\begin{numbered}{Proof of \ref{RingtoDGA}}\label{DGAisBockstein}
For part (1), observe that as usual there are isomorphisms
  \[
      H_i\End_{\dvr}(X)\cong
                         \Ext^{-i}_{\dvr}(\Zmodp,\Zmodp)\,.
\] 
But there is a short projective resolution of $\Zmodp$ over \dvr/ 
\begin{equation}\label{ShortResolution}
   \xymatrix@1{0\ar[r] &\dvr\ar[r]^{\unif}&\dvr\ar[r]&\Zmodp\ar[r]&0}\,.
\end{equation}
By inspection, then,  $\Ext^i_{\dvr}(\Zmodp,\Zmodp)$ vanishes unless $i=0$ or
$i=1$, and in these two exceptional cases the group is isomorphic to
$\Zmodp$. 

Since $\dvr/\ideal\cong\Fp$ is a field and hence a regular ring,
\ref{RingtoDGA}(2) 
is \cite[4.20]{Duality}. \qed
\end{numbered}

\begin{rem}\label{ModelForDGA}
  If \dvr/ is as in
  \ref{RingtoDGA}, an explicit  model for 
  $\End_{\dvr}(\Zmodp)$ can be derived from \ref{ShortResolution} as follows.
\[
\xymatrix@C=6pt{1 & \ang{L}\ar[d]^{\partial L=\unif D_1+\unif D_2}\\
           0 & \ang{D_1,D_2}\ar[d]^{\partial D_i=(-1)^i\unif U}\\
           -1 & \ang{U}}
\quad\qquad\raisebox{-40pt}{\hbox{$\displaystyle \begin{aligned}
   D_1&=\mat{1}{0}{0}{0}\quad\quad 
  \phantom{\hbox{${}_2$}}U=\mat{0}{1}{0}{0} \\
  L&=\mat{0}{0}{1}{0}\quad\quad
  D_2=\mat{0}{0}{0}{1}.
    \end{aligned}$}}
\]
This is a \DGA/ which is nonzero only in degrees $1,\,0,\,-1$; the
notation $\ang{\whatever}$ denotes the free \dvr-module on the
enclosed generators. From a multiplicative point of view the \DGA/
is as indicated a graded form of the ring of $2\times 2$ matrices over~\dvr.
\end{rem}

\begin{titled}{Proof of \ref{DGAtoRing}(1)}
 Let $\LL$ denote the graded algebra $H_*\dga$ and $x\in\LL_{-1}$ an
  additive  generator. Write $\Zmodp$ for the (ordinary) graded \LL-module
  $\LL/\langle x\rangle\cong H_*\dgm$. 
  For general reasons (see \ref{ExplainEM} below)  there is a left half plane Eilenberg-Moore spectral
  sequence
  \[
    E^2_{-i,j}=  \Ext^i_{\LL}(\Sigma^j\Zmodp,\Zmodp)\Rightarrow H_{j-i}\EE\,.
  \]
  (Note that $\Ext$ here is computed in the category of graded modules
  over a graded ring.)
  This is a conditionally convergent spectral sequence of bigraded algebras with differentials
  \[d_r:E^r_{i,j}\to E^r_{i-r,j+r-1}\,,\] and it  abuts to the
  graded algebra $H_*\EE$.
  There is a free resolution of
  \Zmodp/ over \LL/
  \[
      \xymatrix@1{\cdots\ar[r]^-x&{\hbox{$\Sigma^{-2}\LL$}}\ar[r]^-x&\Sigma^{-1}\LL\ar[r]^-x&\LL\ar[r]&\Zmodp\ar[r]&0}
  \]
  which leads by calculation to the conclusion that the bigraded algebra
  $\Ext^i_{\LL}(\Sigma^j\Zmodp,\Zmodp)$ is a polynomial algebra 
  on the extension class
  \[
      \xymatrix@1{0\ar[r] &
         \Zmodp\ar[r]&\Sigma\LL\ar[r]&\Sigma\Zmodp\ar[r]&0}
  \]
  in $     \Ext^1_{\LL}(\Sigma\Zmodp,\Zmodp)$.
  It follows that 
$E_{i,j}^2$ vanishes in the above spectral sequence for
  $i\ne-j$, the spectral sequence collapses, and  
  $E^\infty=E^2$ is concentrated in total degree~0. Hence
  $H_i\EE$ vanishes for $i\ne0$, and $H_0\EE$ is a ring $\dvr$ with
  a decreasing sequence of ideals
  \[
    \cdots\subset
    \ideal_k\subset\cdots\subset\ideal_2\subset\ideal_1\subset\dvr 
  \]
  such that $\ideal_k\ideal_\ell\subset\ideal_{k+\ell}$,
  $\Gr(\dvr)\cong\Fp[t]$, and $\dvr\cong\lim_k\dvr/\ideal_k$.  

  Let $\unif\in\ideal_1\subset\dvr$ be an element which projects to a
  generator of $\ideal_1/\ideal_2\cong\Zmodp$, and map $\Z\poly s$ to
  \dvr/ by sending $s$ to $\unif$. It is easy to argue by induction on $k$
  that the composite $\Z\poly s\to \dvr\to\dvr/\ideal_k$ is surjective,
  so that $\dvr/\ideal_k$ is commutative and
  $\dvr\cong\lim_k\dvr/\ideal_k$ is commutative as well. The ring
  \dvr/ is a noetherian domain because it has a complete filtration
  $\{\ideal_k\}$  such that
  $\Gr(\dvr)$ is a noetherian domain \cite[Chap.~III, \S2, Corr. 2]{Bourbaki}. The ideal
  $\ideal_1/\ideal_k$ is nilpotent in $\dvr/\ideal_k$, and 
  hence  an element $x\in\dvr/\ideal_k$ is a unit if and only the image
  of $x$ in $\dvr/\ideal_1\cong\Fp$ is nonzero. It follows directly
  that $x\in\dvr$ is a unit if and only if the image of $x$ in
  $\dvr/\ideal_1$ is nonzero, and so $\dvr$ is a local ring with
  maximal ideal $\ideal=\ideal_1$. Finally, by induction on $k$ there
  are exact sequences
  \[
    \xymatrix@1{\dvr/\ideal_k \ar[r]^{\unif}&\dvr/\ideal_k\ar[r]&\dvr/\ideal\ar[r]&0}
  \]
  The groups involved are finite, so passing to the limit in $k$ gives
  an exact sequence
  \[
  \xymatrix@1{\dvr\ar[r]^{\unif} &\dvr \ar[r]&\dvr/\ideal\ar[r]&0}
  \]
  expressing the fact that $\unif$ generates the ideal $\ideal$. The
  element $\unif$ is not nilpotent (because its image in
  $\Gr(\dvr)\cong\Fp\poly t$ is $t$) and so by Serre
  \cite[I.\S2]{Serre} \dvr/ is a discrete valuation ring.
  \qed
\end{titled}

\begin{titled}{Proof of \ref{DGAtoRing}(2)} By \ref{DGAisBockstein}
  and \ref{DGAtoRing}(1), $\End_{\EE}(X)$ is a \DGA/ of type \Bp.
  It is thus enough to show that the map $H_{-1}\dga\to
  H_{-1}\End_{\EE}(X)$ is an isomorphism, or even that this map is
  nonzero.  We will use the notation of \S\ref{CSemiCanon}. Note that
  the mapping cone of the \dga-module map $\epsilon:\dga\to \CC\dga$ is again $\dga$,
  and that the mapping cone of any nontrivial map $\dga\to\dgm$ is
  again equivalent to \dgm, this last by a
  homology calculation and the uniqueness result of
  \S\ref{CSemiCanon}.
  Consider the following diagram, in which the rows are exact
  triangles and the middle vertical map is provided by \S\ref{CSemiCanon}.
  \begin{equation}\label{Cofibrations}
    \xymatrix{
        \dga \ar[r]\ar[d]^= &\CC\dga\ar[r]\ar[d]& \dga\ar[d]^{\text{``$\epsilon$''}}
        \ar@{-->}[r]^-a &\Sigma\dga\\
         \dga\ar[r]^\epsilon &\dgm\ar[r]^{\unif}&\dgm}
  \end{equation}
  Here $a$ denotes (right) multiplication by a generator of
  $H_{-1}\dga$. (In the language of \cite{Duality}, the lower row here
  shows that $X$ is proxy-small over \dga, and \ref{DGAtoRing}(2)
  now follows fairly directly from \cite[pf.~of~4.10]{Duality}. For convenience we
  continue with a direct argument.)
  After a suitable identification of the mapping cone of
  $\epsilon$ with $X$, a homology calculation gives that the  right
  vertical map is homotopic to~$\epsilon$.
  The
  right lower map is entitled to be labeled $\unif$, as in
  \ref{DGAisBockstein}, because the
  homology of its mapping cone (namely $H_*\Sigma\dga=\Sigma H_*A$) evidently represents a
  nonzero element of 
  \[
     \Ext^1_{H_*\dga}(\Sigma\Zmodp,\Zmodp)\,.
  \]
  Note that the element \unif/ of \dvr/ is determined only up to
  multiplication by a unit, and this is reflected above in the fact
  that there are  various
  ways to identify the  mapping cone of~$\epsilon$ with~\dgm. Applying
  $\Hom_{\dga}(\whatever,X)$ to \ref{Cofibrations} gives
  \begin{equation}\label{MappedCofibrations}
    \xymatrix{
      X & \ar[l] \Hom_{\dga}(\CC\dga,\dgm) & \ar[l] X &\ar@{-->}[l]_{a^*} \Sigma^{-1}X\\
      X\ar[u]^=  & \ar[l]_{\epsilon^*}\ar[u] \EE & \ar[u]_{\epsilon^*}\ar[l]_{\unif^*} \EE}
  \end{equation}
  The map $\epsilon^*$ is surjective on homology, since
  by the argument of \S\ref{CSemiCanon} any \dga-map $\dga\to\dgm$
  extends over~$\epsilon$ to a map $\dgm\to\dgm$. Let $M$ be the
  \dvr-module $H_0\Hom_{\dga}(\CC\dga,\dgm)$. Applying $H_0$ to the
  solid arrows in  diagram
  \ref{MappedCofibrations} gives a diagram of exact sequences
  \[
  \xymatrix{0& \ar[l]\Zmodp &\ar[l] M{\vphantom{\hbox{$\Zmodp$}}} & \ar[l] \Zmodp&\ar[l] 0 \\
            0 & \ar[l] \Zmodp \ar[u]^= &\ar[l]\dvr\ar[u] &\ar[l]_{\unif}
            \dvr\ar[u]_{\text{onto}}&\ar[l]0}
  \]
  The diagram implies that as an \dvr-module, $M$ is $\dvr/(\unif^2)$.
  In particular the extension on the top line is nontrivial over \dvr,
  which, by backing up the exact triangle as in
  \ref{MappedCofibrations}, implies that $a^*\colon \Sigma^{-1}X\to X$
  represents a nonzero element of $H_{-1}\End_{\EE}(X)$. But, by
  construction, $a^*$ is given by left multiplication with a generator
  of $H_{-1}\dga$.  \qed
\end{titled}

\begin{numbered}{Eilenberg-Moore spectral sequence}\label{ExplainEM}
If \dga/ is a \DGA/ and $X$, $Y$ are modules over \dga, there is an
Eilenberg-Moore spectral sequence
\[
     E^2_{-i,j}=\Ext^i_{H_*\dga}(\Sigma^jH_*X, H_*Y)\Rightarrow
     H_{j-i}\Hom_{\dga}(X,Y)\,.
\]
In a homotopy context this can be constructed in precisely the same
way as an Adams spectral sequence. In an algebraic context it is
constructed by inductively building an exact sequence of \dga-modules
\begin{equation}\label{EMResolution}
\xymatrix@1{0&\ar[l]X& \ar[l] F(0)& \ar[l] F(1) &\ar[l]\cdots &\ar[l]
  F(i)&\ar[l]\cdots}
\end{equation}
such that 
\begin{itemize}
\item each $F(i)$ is a a sum of shifts of
  copies of \dga, and
\item applying $H_*$ to \ref{EMResolution} produces a free resolution
  of $H_*X$ over $H_*\dga$.
\end{itemize}
See for 
instance \cite[9.11]{rLooking}. 
The totalization $tF$ of the double complex $F$ is then a project (or cofibrant)
model for $X$; filtering $tF$ by $\{tF({\le} n)\}_n$ and applying
$\Hom_{\dga}(\whatever, Y)$ gives  a filtration of
$\Hom_{\dga}(X,Y)$ which yields the spectral sequence.
  \end{numbered}

\section{Exterior algebras over $\Z$}\label{CUniqueOverZ}
In this section we prove the first claim of Theorem \ref{ModZcase}. To
be specific, we show that any \DGA/ \dga/ of type \Br\Z/  is equivalent to the formal \DGA/ \FF/ given by the following chain
complex concentrated in degrees $0,\,-1$.
\[
 \xymatrix@C=6pt{{\phantom{-}0} & \Z \ar[d]^{\partial=0}\\
                 -1 & \Z}
\]
The multiplication on \FF/ is the only
possible one consistent with the requirement that $1\in\Z=\FF_0$ act
as a unit.

We will only sketch the line of reasoning, since it is similar to that
in sections \S\ref{CSemiCanon}-\ref{CExtOverFp}, although the
conclusion is very different. Along the lines of
\S\ref{CSemiCanon} there exists a module $X$ of type $(\Z,0)$ over
\dga, and this is  unique up to noncanonical \equivalence. Let
$\EE=\End_{\dga}(X)$. The argument in the proof of \ref{DGAtoRing}(1)
shows that $H_i\EE$ vanishes for $i\ne0$, and that $H_0\EE$ is a ring
\ring/ with a decreasing sequence of ideals
\[ 
    \cdots\subset
    \ideal_k\subset\cdots\subset\ideal_2\subset\ideal_1\subset\ring
  \]
such that $\ideal_k\ideal_\ell\subset\ideal_{k+\ell}$,
  $\Gr(\ring)\cong\Z[t]$, and $\ring\cong\lim_k\ring/\ideal_k$. Let
  $\sigma\in \ideal_1$ be an element which projects to a generator of
  $\ideal_1/\ideal_2\cong \Z$, and map $\Z[s]$ to \ring/ by sending
  $s$ to $\sigma$. It is easy to show by induction on $k$ that this
  map induces isomorphisms $\Z\poly s/(s^{k+1})\to R/\ideal_k$, and so
  induces an isomorphism $\Z\pwr s\to R$. Using the free resolution
  \begin{equation}\label{SmallResOverZ}
    \xymatrix@1{0\ar[r] & {\Z\pwr s}\ar[r]^s &{\Z\pwr s}\ar[r]&\Z\ar[r]&0}
  \end{equation}
  of $\Z$ over $\Z\pwr s$ ($s$ acting by zero on \Z) one argues as in the proof of
  \ref{RingtoDGA}(1) that $\End_{\Z\pwr s}(\Z)$ is a
  \DGA/ of type $\Br \Z$.  As in 
 the proof of \ref{DGAtoRing}(2), the natural map 
  $A\to\End_{\EE}(\dgm)\sim\End_{\Z\pwr s}(\Z)$ is
  \anequivalence. This reasoning applies in particular to \FF/,
   giving $\FF\sim\End_{\Z\pwr s}(\Z)$. Hence \dga/ is
  equivalent to \FF.
\section{Non-canonical modules of type \pzero/}\label{CManyModules}

In this section we complete the proof of \ref{ModZcase}. The
uniqueness statement for \dga/ was proven in \S\ref{CUniqueOverZ}, so
we only have to handle the classification of modules of type~\pzero. 

Let
$\ring=\Z\poly s$ and let $\ring$ act on $\Z$ via $\Z\cong\ring/(s)$. Clearly $\Ext^i_{\ring}(\Z,\Z)$ is $\Z$ for $i=0,1$ and zero
otherwise, so that $\EE=\End_{\ring}(\Z)$ is a \DGA/ of type
$\Br\Z$. By uniqueness, we may as well take $\dga=\EE$; the following
proposition explains why this is useful.

Say that a chain complex \NN/ of \ring-modules is \emph{$s$-torsion} if
for each $i\in\Z$ and each $x\in H_i\NN$ there is an integer $k(x)>0$
such that $s^{k(x)}x=0$. 

\begin{prop}\cite[2.1]{CompleteTorsion}\label{ConvertToR}
  The assignment $\NN\mapsto\Hom_R(\Z,N)$ restricts to a bijection
  between equivalence classes of $s$-torsion chain complexes over $R$
  and equivalence classes of right \EE-modules.
\end{prop}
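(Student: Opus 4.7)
The plan is to promote the assignment $N \mapsto F(N) = \Hom_R(\Z,N)$ to an adjoint equivalence of derived categories. The module $\Z$ is naturally an $\EE$-$R$-bimodule (with $\EE$ acting on the left by evaluation), so the derived tensor product $G(M) = M \Tensor_{\EE} \Z$ defines a functor from right $\EE$-modules to $R$-modules, and $G$ is left adjoint to $F$ under the standard tensor-hom adjunction. The goal is to show that the unit $\eta\colon M \to FG(M)$ is an equivalence for every right $\EE$-module $M$, and that the counit $\varepsilon\colon GF(N) \to N$ is an equivalence precisely when $N$ is $s$-torsion.

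Both functors are exact and coproduct-preserving: $G$ by construction, and $F$ because $\Z = R/(s)$ is perfect over $R$ via the short resolution $0 \to R \xrightarrow{s} R \to \Z \to 0$. The base-case computations are $F(\Z) = \End_R(\Z) = \EE$ and $G(\EE) = \EE \Tensor_{\EE} \Z \simeq \Z$, so $\eta$ at $M = \EE$ and $\varepsilon$ at $N = \Z$ are both equivalences. Since the class of $M$ on which $\eta$ is an equivalence is a localizing subcategory of right $\EE$-modules containing the compact generator $\EE$, the unit is an equivalence for every right $\EE$-module $M$.

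The serious step is the counit. The class of $N$ for which $\varepsilon$ is an equivalence is a localizing subcategory of $D(R)$ containing $\Z$; conversely $G(M)$ is built from copies of $\Z$ by (homotopy) colimits and triangles and so is automatically $s$-torsion. What must be shown is therefore that every $s$-torsion $N$ lies in the localizing subcategory generated by $\Z$. The short exact sequences
\[
   0 \to R/(s) \xrightarrow{\; s^{n-1} \;} R/(s^n) \to R/(s^{n-1}) \to 0
\]
place each $R/(s^n)$, by induction on $n$, in the thick subcategory generated by $\Z$; an arbitrary $s$-torsion $N$ is then reached by filtering $H_*N$ by the kernels of the $s^n$ and forming the homotopy colimit of complexes whose homology is annihilated by some power of $s$. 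This identification of the $s$-torsion subcategory with the localizing subcategory generated by $\Z$ is the main obstacle and is precisely the content of \cite[2.1]{CompleteTorsion} applied to $R$ with the regular element $s$; granting it, $F$ and $G$ restrict to mutually inverse equivalences, which on objects yields the asserted bijection.
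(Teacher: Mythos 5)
The paper offers no proof of this proposition: it is quoted from Dwyer--Greenlees \cite[2.1]{CompleteTorsion}, so there is no internal argument to compare against. Your reconstruction is essentially the standard Morita-theoretic proof underlying that reference: $\Hom_R(\Z,\whatever)$ and $\whatever\Tensor_{\EE}\Z$ are adjoint, both preserve triangles and coproducts because $\Z\simeq R/(s)$ is perfect over $R$ via $0\to R\xrightarrow{\ s\ }R\to\Z\to0$, the unit is an equivalence on a localizing subcategory containing the compact generator $\EE$ and hence everywhere, and the counit is an equivalence exactly on the localizing subcategory generated by $\Z$; the real content is identifying that subcategory with the $s$-torsion complexes. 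Two caveats. First, your closing appeal to \cite[2.1]{CompleteTorsion} for precisely this identification is circular in context, since that is the statement being proved; it is only cosmetic because you also sketch an argument. Second, that sketch is the one soft spot: ``filtering $H_*\NN$ by the kernels of the $s^n$'' does not by itself produce a filtration of the complex $\NN$ by subcomplexes, and a complex with $s$-power-torsion homology is not immediately seen to be built from $\Z$. A clean repair: since $H_*\NN$ is $s$-torsion and localization is exact, $\NN[1/s]\simeq0$, so
\[
   \NN\;\simeq\;\operatorname{fib}\bigl(\NN\to \NN[1/s]\bigr)\;\simeq\;\Sigma^{-1}\bigl(R[1/s]/R\bigr)\Tensor_R \NN\;\simeq\;\hocolim_n\,\Sigma^{-1}\bigl(R/(s^n)\bigr)\Tensor_R \NN\,;
\]
each $R/(s^n)$ lies in the thick subcategory generated by $\Z$ by your short exact sequences, and since $R$ generates the derived category of $R$, every localizing subcategory is a tensor ideal, so the right-hand side lies in the localizing subcategory generated by $\Z$. (Alternatively, a truncation/Postnikov-tower argument on $\NN$ works.) With that step made precise, your adjoint-equivalence argument is complete and does yield the asserted bijection on equivalence classes.
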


\begin{rem} Here \EE/ acts on $\Hom_R(\Z,N)$ through its action
  on \Z.  The inverse to this bijection assigns to a right \EE-module
  \dgm/ the derived tensor product $\dgm\Tensor_{\EE}\Z$.
\end{rem}

It is clear from \S\ref{CUniqueOverZ} that  $\EE\sim\FF$ is equivalent as a \DGA/ to
  its opposite algebra, so we can pass over the distinction
  between right and left \EE-modules. The question of studying
  \EE-modules of type \pzero/ thus becomes one of
  classifying $s$-torsion chain complexes \NN/ over \ring/ such that
  \begin{equation}\label{MagicCondition}
             H_i\Hom_{\ring}(\Z,\NN)\cong\begin{cases}
                                       \Zmodp & i=0\\
                                         0    & i\ne 0
                                       \end{cases}
\end{equation}
Suppose that \NN/ is such a chain complex.
Applying $\Hom_R(\whatever,\NN)$ to the exact sequence
\[
 \xymatrix@1{0\ar[r] & R\ar[r]^s& R\ar[r]&\Z\ar[r]&0}
\]
and taking homology gives a long exact sequence
\[
\xymatrix@1{\cdots\ar[r]&H_i\Hom_{\ring}(\Z,\NN)\ar[r]
  &H_i\NN\ar[r]^s&H_i\NN\ar[r]&\cdots}\,.
\]
in which the arrow labeled ``$s$'' cannot be injective unless
$H_i\NN=0$. 
It follows that $H_i\NN$ vanishes unless $i=0$, and that $H_0\NN$ is
an abelian group \DD/ on which the operator $s$ acts with the
following properties:
\begin{enumerate}
\item $\ker(s)$ is isomorphic to $\Z/p$,
\item $s$ is surjective, and
\item $\cup_{k\ge0}\ker (s^k)= \DD$.
\end{enumerate}
Call such a pair $(\DD, s)$ \emph{\admissible/}. In fact, any
\admissible/ pair
$(\DD,s)$ gives a chain complex \NN/ over \ring/ (\DD/ itself
concentrated in degree~0) which has property \ref{MagicCondition}.
Combining this observation with \ref{SometimesUnique} and
\ref{ConvertToR} thus provides a bijection between
\begin{itemize}
\item equivalence classes of modules over \EE/ of type \pzero, and
\item isomorphism classes of \admissible/ pairs $(\DD,s)$.
\end{itemize}

The proof of \ref{ModZcase} is completed by the following two routine
lemmas. Recall that $\ring=\Z\poly s$. If \dvr/ is a discrete valuation
ring with uniformizer \unif, write $\dvr/\unif^\infty$ for the
quotient $\dvr[1/\unif]/\dvr$. (This quotient is the injective hull of
$\dvr/\unif$ as an ordinary module over \dvr.)

\begin{lem}\label{StartWithPair}
  Suppose that $(\DD,s)$ is \adistinguished/ pair. Then
  $\dvr=\Ext^0_{\ring}(\DD,\DD)$ is a complete discrete valuation ring
  with residue field \Fp/ and uniformizer $\unif=s$. The pair $(D,s)$
  is naturally isomorphic to $(\dvr/\unif^\infty,\pi)$.
\end{lem}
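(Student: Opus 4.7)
The plan is to present $\dvr = \Hom_R(D,D)$ as the inverse limit of local Artinian rings $\dvr_k = \End_R(D_k)$, where $D_k = \ker(s^k\colon D \to D)$, and to read off the DVR structure from this limit.

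First I would show each $D_k$ is cyclic of order $p^k$ as an $R$-module. Admissibility restricts $s$ to short exact sequences $0 \to D_1 \to D_{k+1} \xrightarrow{s} D_k \to 0$, so inductively $|D_k| = p^k$, and one may pick compatible generators $y_k \in D_k \setminus D_{k-1}$ with $sy_{k+1} = y_k$. A length count in the filtration $\{Ry_k \cap D_i\}_i$, using that $Ry_k \ni s^{k-i}y_k = y_i$ together with $D_i/D_{i-1} \cong \Z/p$, then forces $Ry_k = D_k$.

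Next I would analyze $\dvr_k$. Evaluation at $y_k$ gives a bijection $\dvr_k \xrightarrow{\sim} D_k$: injectivity from cyclicity, and surjectivity because $R$ is commutative, so $\operatorname{Ann}_R(y_k)$ annihilates every element of $Ry_k = D_k$ and any prescribed value $y_k \mapsto x$ extends to an $R$-endomorphism. Composing with the canonical map $R \to \dvr_k$ recovers $r \mapsto ry_k$, so $R$ surjects onto $\dvr_k$, which is therefore commutative with $s^k = 0$. An element $\phi \in \dvr_k$ is invertible precisely when $\phi(y_k) \notin D_{k-1}$ (apply the cyclicity argument to $\phi(y_k)$), identifying $\dvr_k$ as local with maximal ideal $s\dvr_k$, residue field $\Fp$, and $s^{k-1} \ne 0$.

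Finally, the restriction $\dvr_{k+1} \to \dvr_k$ is surjective with kernel $s^k\dvr_{k+1}$: given $\psi \in \dvr_k$, lift $\psi(y_k)$ under $s$ to some $z \in D_{k+1}$ and send $y_{k+1} \mapsto z$. Passing to the limit, $\dvr = \lim_k \dvr_k$ is a complete commutative local ring with principal maximal ideal $(s)$, residue field $\Fp$, $\bigcap_k s^k\dvr = 0$, and non-nilpotent $s$; every nonzero element has the form $us^n$ for a unit $u$, so $\dvr$ is a complete DVR with uniformizer $\unif = s$. For the final identification, $\dvr/\unif^\infty \cong \colim_k \dvr_k$ with transitions being multiplication by $\unif$, and under $\dvr_k \cong D_k$ (sending $1 \mapsto y_k$) the inclusion $D_k \hookrightarrow D_{k+1}$ corresponds to multiplication by $s = \unif$ (since $y_k = sy_{k+1}$), so the colimits match as $R$-modules. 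The main obstacle I expect is the cyclicity of $D_k$ together with the surjectivity of the restriction maps and identification of their kernels; these underpin everything else.
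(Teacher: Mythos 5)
The paper itself gives no argument for this lemma (it is dismissed as one of "two routine lemmas"), so there is nothing to compare against; judged on its own, your proof is correct and is the natural way to verify the claim. The key steps all check out: $D_k=\ker(s^k)$ has order $p^k$ and is cyclic over $R$ by your length count; evaluation at $y_k$ identifies $\End_R(D_k)$ with $D_k$, so $\dvr_k$ is a finite commutative local ring with maximal ideal $s\dvr_k$, residue field $\Fp$, and $s^{k-1}\ne 0$; restriction maps are surjective with kernel $s^k\dvr_{k+1}$; and the colimit comparison $\colim_k \dvr/\pi^k \cong \colim_k D_k$ gives $(\DD,s)\cong(\dvr/\unif^\infty,\unif)$. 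Two small points you leave implicit and could state: the identification $\Hom_R(\DD,\DD)\cong\lim_k\dvr_k$ uses that any $R$-map out of $D_k$ has image killed by $s^k$ and hence lands in $D_k$; and to get "every nonzero element is $us^n$" you should note that $\ker(\dvr\to\dvr_n)=s^n\dvr$ (lifting $y$ with $s^ny=x$ uses the surjectivity of the transition maps again), which also gives $\dvr/s^n\dvr\cong\dvr_n$ and hence completeness in the $s$-adic sense. Neither is a gap in substance. The only caveat is the word "naturally" in the statement: your isomorphism depends on the choice of compatible generators $y_k$, so it is canonical only up to a unit of $\dvr$; since the theorem only needs a bijection of isomorphism classes, this does not affect the application.
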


\begin{lem}\label{StartWithDVR}
  Suppose that \dvr/ is a complete discrete valuation ring with
  residue field \Fp/ and uniformizer~\unif. Then
  $(\DD,s)=(\dvr/\pi^\infty,\pi)$ is \adistinguished/ pair, and the
  natural map $\dvr\to\Ext^0_R(\DD,\DD)$ is an isomorphism.
\end{lem}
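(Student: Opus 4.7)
The plan is to verify admissibility of $(\dvr/\unif^\infty,\unif)$ directly from the definitions, and then to compute $\Ext^0_R(\DD,\DD)$, that is, the ordinary ring of $R$-linear endomorphisms of $\DD$, by filtering $\DD$ by its finite $\unif$-torsion submodules $\DD_n:=\ker(s^n)=\unif^{-n}\dvr/\dvr$ and passing to an inverse limit. Under the identification $\DD=\dvr[1/\unif]/\dvr$ with $s$ acting as $\unif$, admissibility is immediate: $\ker(s)=\unif^{-1}\dvr/\dvr\cong\dvr/\unif\cong\Fp\cong\Z/p$; multiplication by $\unif$ is surjective by the formula $x=\unif\cdot(\unif^{-1}x)$ for $x\in\dvr[1/\unif]$; and $\bigcup_{k\ge 0}\ker(s^k)=\bigcup_k \unif^{-k}\dvr/\dvr=\DD$, because every element of $\dvr[1/\unif]$ has a bounded $\unif$-denominator.

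For the isomorphism $\dvr\cong\Ext^0_R(\DD,\DD)$, each $\DD_n$ is cyclic over $\dvr$, generated by $y_n=\unif^{-n}+\dvr$, and is isomorphic to $\dvr/\unif^n$ as a $\dvr$-module. Since $\DD=\colim_n \DD_n$ and every $R$-linear map $\DD_n\to\DD$ automatically lands in $\DD_n$ (its image is killed by $\unif^n$), one has $\Ext^0_R(\DD,\DD)=\lim_n\Ext^0_R(\DD_n,\DD_n)$. Granting a compatible identification $\Ext^0_R(\DD_n,\DD_n)\cong\dvr/\unif^n$, the completeness of $\dvr$ with respect to $\ideal$ yields $\Ext^0_R(\DD,\DD)\cong\lim_n\dvr/\unif^n\cong\dvr$, via the natural map sending $a\in\dvr$ to multiplication by $a$.

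The main obstacle is precisely this finite-level identification $\Ext^0_R(\DD_n,\DD_n)\cong\dvr/\unif^n$. A priori an $R$-linear endomorphism need only commute with multiplication by $\unif$, whereas the evident ring $\End_\dvr(\DD_n)\cong\dvr/\unif^n$ requires commuting with all of $\dvr$, and $\dvr$ need not be generated over $\Z$ by $\unif$. The crux is the surjectivity of $\Z[\unif]\to\dvr/\unif^n$, which follows from the structure theorem for complete discrete valuation rings with residue field $\Fp$ \cite[II.4-5]{Serre}: every element of $\dvr/\unif^n$ admits a truncated $\unif$-adic expansion $\sum_{i<n}a_i\unif^i$ with coefficients drawn from the lift $\{0,\ldots,p-1\}\subset\Z$ of $\Fp$. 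Granting this, $y_n$ is already cyclic over $R$, so an $R$-linear endomorphism is determined by its value $f(y_n)$; any choice $f(y_n)=b\,y_n$ with $b\in\dvr/\unif^n$ yields a well-defined $R$-map, and such a map is automatically $\dvr$-linear by the same expansion argument. This gives $\Ext^0_R(\DD_n,\DD_n)\cong\dvr/\unif^n$, compatibly in $n$, and the limit passage of the previous paragraph then completes the proof.
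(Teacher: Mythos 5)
Your argument is correct. The paper offers no proof to compare against—it declares \ref{StartWithPair} and \ref{StartWithDVR} ``routine''—and your verification supplies exactly the intended argument: admissibility read off directly from $\DD=\dvr[1/\unif]/\dvr$ with $s=\unif$; the identification $\Ext^0_R(\DD,\DD)\cong\lim_n\Ext^0_R(\DD_n,\DD_n)$ via the filtration $\DD_n=\ker(s^n)\cong\dvr/\unif^n$ (legitimate because an $R$-endomorphism commutes with $s$ and hence preserves each $\DD_n$); cyclicity of $\DD_n$ over $R=\Z\poly s$ coming from surjectivity of $\Z\poly s\to\dvr/\unif^n$, so that $\Ext^0_R(\DD_n,\DD_n)\cong\dvr/\unif^n$ compatibly in $n$; and the final limit passage using completeness of \dvr. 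The one cosmetic remark is that invoking the structure theory of \cite{Serre} for coefficient representatives is heavier than necessary: surjectivity of $\Z\poly s\to\dvr/\unif^n$ follows by a short induction on $n$ using only $\dvr/\unif\cong\Fp$, which is precisely the argument the paper already uses in the proof of \ref{DGAtoRing}(1).
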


\begin{rem}\label{ChainComplexFromModule}
  According to \ref{ModZcase}, any complete discrete valuation ring
  \dvr/ with residue field \Fp/ and uniformizer \unif/ should give
  rise to a module \dgm/ of type \pzero/ over the formal \DGA/ \FF/ of
  \S\ref{CUniqueOverZ}. Observe  that a module over \FF/ is just a chain complex with a
  self-map $f$ of degree~$-1$ and square~$0$. Let $(D,s)=(\dvr/\unif^\infty,\unif)$ be the
  \distinguished/ pair associated to $\dvr$ and~\unif. Tracing through
  the above arguments shows that $X$ can be taken to be the following
  object
  \[
   \xymatrix@C=40pt{{\phantom{\hbox{$-$}}}0 & D \ar[d]^{\partial=s}
     \ar@/_1.5pc/[d]_{f=\operatorname{id}}\\
             -1 & D}
  \]
  concentrated in degrees~$0$ and~$-1$.
\end{rem}

\section{Other exterior algebras}\label{COther}
Suppose that \ring/ is a commutative ring, and say that a \DGA/ \dga/ is
\emph{of type \BrOf \ring n/} if $H_*\dga$ is an exterior algebra over \ring/ on a
class of degree~$n$. In this section we briefly consider the problem
of classifying such \DGAs/  if $n\ne -1$. If $n=0$,
\dga/ is formal (\ref{Formality}) and hence determined up to equivalence by
the ring $H_0\dga$, so we may as well also  assume~$n\ne 0$.

Say that a \DGA/ is of type \PrOf \ring n/ if $H_*\dga$ is isomorphic
to a polynomial algebra over \ring/ on a class of degree~$n$.

\begin{prop} \label{SimpleDuality} If $n\notin\{0,-1\}$, there is a natural bijection between
  equivalence classes of \DGAs/ of type \BrOf \ring n/ and equivalence
  classes of \DGAs/ of type \PrOf\ring{-n-1}/.
\end{prop}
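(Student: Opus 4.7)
My plan is to mimic the Moore--Koszul duality approach of \S\ref{CSemiCanon}--\S\ref{CUniqueOverZ}, with $R$ itself (as a module concentrated in degree~$0$) playing the role of $\Z/p$ or~$\Z$. Given a \DGA/ $\dga$ of type \BrOf R n/, whose homology generator sits in degree~$n$, I would adapt the inductive mapping-cone construction of \S\ref{CSemiCanon}, attaching copies of $\Sigma^n\dga$ along the Bockstein-like generator of $H_n\dga$, to produce a module $X$ of type $(R,0)$, unique up to noncanonical equivalence, and set $\Phi(\dga)=\End_\dga(X)$. In the reverse direction, for a \DGA/ $\dgab$ of type \PrOf R{-n-1}/, I would produce by an analogous procedure a module $X'$ of type $(R,0)$ over $\dgab$ and set $\Psi(\dgab)=\End_\dgab(X')$.

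To see $\Phi(\dga)$ has type \PrOf R{-n-1}/, I would run the Eilenberg--Moore spectral sequence of \ref{ExplainEM}. Writing $\LL=H_*\dga\cong\LL_R(x)$ with $|x|=n$, the Koszul resolution
\[
\cdots\to\Sigma^{2n}\LL\xrightarrow{x}\Sigma^{n}\LL\xrightarrow{x}\LL\to R\to 0
\]
identifies $\Ext^*_\LL(R,R)$ as the polynomial $R$-algebra on a class $y\in E^2_{-1,-n}$ of total degree $-n-1$. The powers $y^k$ sit at $E^2_{-k,-kn}$; a differential $d_r$ sends such a position into $E^r_{-k-r,-kn+r-1}$, which lands on a nonzero class only when $r(n+1)=1$, impossible for $r\ge 2$. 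Thus $E^\infty=E^2$, and because the hypothesis $n\ne -1$ ensures the total degrees $-k(n+1)$ are all distinct, there are no filtration extensions; hence $H_*\Phi(\dga)\cong R[y]$ as graded $R$-algebras. The complementary computation that $\Psi(\dgab)$ has type \BrOf R n/ uses the short Koszul resolution $0\to\Sigma^{-n-1}R[y]\xrightarrow{y}R[y]\to R\to 0$, whose sparse $\Ext$ forces instant collapse, and the resulting generator of $H_n\Psi(\dgab)$ has square zero because $2n\notin\{0,n\}$ is an unoccupied degree.

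To conclude that the constructions are mutually inverse, I would verify that the natural double-centralizer maps $\dga\to\End_{\Phi(\dga)}(X)$ and $\dgab\to\End_{\Psi(\dgab)}(X')$ are equivalences. In analogy with \ref{DGAtoRing}(2), I would set up an exact triangle (as in diagram \ref{Cofibrations}) involving the mapping cone of right multiplication by a representative of the generator $x\colon\Sigma^n\dga\to\dga$ in order to witness the proxy-smallness of $X$ over $\dga$; then, after applying $\Hom_\dga(-,X)$ and chasing the induced diagram of short exact sequences, the comparison map is seen to hit a nonzero class in $H_n$ of the target, which is enough because both source and target are already of type \BrOf R n/. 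The opposite composition $\Phi\Psi\simeq\mathrm{id}$ follows by the symmetric argument with exterior and polynomial interchanged.

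I expect the main obstacle to be this double-centralizer step. The spectral-sequence collapses are routine once $n\notin\{0,-1\}$, but verifying $\dga\to\End_{\Phi(\dga)}(X)$ is an equivalence over an arbitrary commutative base $R$---without the regularity hypothesis invoked in \cite[4.20]{Duality}---requires a careful proxy-small argument, since $X$ need not be perfect over $\dga$. The cleanest route is likely through the bimodule formalism of \cite{rSchwedeShipleyModules}, rather than a step-by-step imitation of the ad hoc endomorphism computation at the end of \S\ref{CExtOverFp}.
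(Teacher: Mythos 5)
Your plan is essentially the paper's own (sketched) proof: produce the type-$(R,0)$ module on each side (inductively as in \S\ref{CSemiCanon} for the exterior side, a single mapping cone for the polynomial side), pass to the derived endomorphism \DGAs/, and let the collapsing Eilenberg--Moore spectral sequences --- with $n\notin\{0,-1\}$ ruling out the degree-zero accumulation --- identify the homology and give the bijection, the double-centralizer step being handled along the lines of \S\ref{CExtOverFp} exactly as you propose. One caution: over a general commutative ring $R$ it is not enough that the comparison map hit a \emph{nonzero} class in $H_n$ of the target (a nonzero $R$-module map $R\to R$ need not be an isomorphism, unlike the $\Zmodp$ situation exploited in the proof of \ref{DGAtoRing}(2)), so your diagram chase must show that the generator of $H_n\dga$ maps to a generator, i.e.\ a unit multiple of the exterior generator of $H_n$ of the target.
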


\begin{proof}(Sketch)  If \dga/ is of type \BrOf \ring n/, the inductive
  technique of \S\ref{CSemiCanon} produces a left module $\dgm_{\dga}$ over
  \dga/ of type $(R,0)$, i.e., a module $\dgm_{\dga}$ such that
  $H_0(\dgm_{\dga})$ is a free (ordinary) module of rank~1 over
  $R=H_0\dga$ and $H_i(\dgm_{\dga})=0$ for $i\ne0$.  If \dgab/ is of type
  \PrOf\ring n/, it is even easier to produce a right module $\dgm_{\dgab}$
  over $\dgab$ of type $(R,0)$: just take the mapping cone of
  $f\colon\Sigma^n\dgab\to\dgab$, where $f$ represents
  left multiplication by a generator of $H_n\dgab$. In both cases the
  modules are unique up to possibly non-canonical
  equivalence. Along the lines of \S\ref{CExtOverFp}
  (cf. \ref{MooreKoszul}), calculating with appropriate collapsing Eilenberg-Moore
  spectral sequences now gives the desired bijection.
\[
\xymatrix@1@C=4.5pc{\txt<13pc>{\{\DGAs/ \dga/ of type \BrOf\ring n/} \ar@<0.5ex>[r]^-{\dga\mapsto\End_{\dga}(\dgm_{\dga})} &\quad\txt<13pc>{\{\DGAs/ \dgab/ of type \PrOf\ring {-n-1}/\}}\ar@<1.5ex>[l]^-{\dgab\mapsto\End_{\dgab}(\dgm_{\dgab})}}
\]
\end{proof}

\begin{rem} It is clear from \ref{Modpcase} that Proposition \ref{SimpleDuality} fails for $n=-1$ and
  $\ring=\Fp$, essentially because if $\dga$ is of type $\BrOf\Fp{-1}$
  the nonvanishing entries of the Eilenberg-Moore spectral sequence
  for $H_*\End_{\dga}(\dgm_{\dga})$ accumulate in degree~0.  This
  accumulation creates extension possiblities which allow for a
  profusion of complete
  discrete
  valuation rings in the abutment. Similarly, \ref{SimpleDuality} fails for $n=0$ and
  $\ring=\Fp$, because if \dgab/ is of type $\PrOf\Fp{-1}$ the
  Eilenberg-Moore spectral sequence for
  $H_*\End_{\dgab}(\dgm_{\dgab})$ accumulates in degree~0. In this
  case, though, the accumulation has consequences which are less
  drastic, because up to isomorphism there are only two possibilities
  for a ring $R$ with an ideal $I$ such that $I^2=0$ and
  such that the associated graded ring $\{R/I, I\}$ is an exterior
  algebra on one generator over~\Fp.  The conclusion is that up to equivalence there are only two \DGAs/ of type
  $\PrOf\Fp{-1}$, one corresponding to the true exterior algebra
  $\Fp[t]/t^2$, and the other to the fake exterior algebra $\Z/p^2$.
\end{rem}

\begin{numbered}{\DGAs/ of type \BrOf\Z n/, all~$n$} Up to equivalence, there
  is only one of these for each $n$. The case $n=0$ is trivial
  (\ref{Formality}), while $n=-1$ is
  \ref{ModZcase}. By \ref{SimpleDuality}, for other $n$ these
  correspond to \DGAs/ of type $\PrOf\Z n$, but there is only
  one of these for each $n$, because they are all formal~(\ref{Formality}). 
\end{numbered}

\begin{numbered}{\DGAs/ of type $\BrOf\Fp n$, $n\ge0$} As usual, the
  case $n=0$ is trivial: up to equivalence there is only one
  example. We sketch an argument that if $n>0$ is odd there is only
  one example, while if $n>0$ is even, there are two. In
  \cite{Postnikov}, Dugger and Shipley describe a Postnikov approach
  to constructing a connective (\ref{SometimesUnique}) \DGA/ \dga; the
  technique involves starting with the ring $H_0\dga$ (considered as a
  \DGA/ with trivial higher homology) and attaching one homology group
  at a time, working from low dimensions to high. If \dga/ is of type
  $\BrOf\Fp n$ there is only a single homology group to deal with,
  namely $\Z/p$ in degree~$n$. By \cite[Thm.~8]{Postnikov} and a
  theorem of Mandell \cite[Rem.~8.7]{Postnikov} the choices involved
  in attaching $H_n\dga$ can be identified with the group
  $HH^{n+2}_{\Z}(\Fp,\Z/p)$; this is Shukla cohomology of \Fp/ with
  coefficients in the $\Fp$-bimodule $\Z/p$. In our notation this
  group might be written
 \begin{equation}\label{Gluing}
              H_{-n-2}\Hom_{\Fp\Tensor_{\Z}\Fp\op}(\Z/p,\Z/p)
 \end{equation}
 where the indicated tensor product over \Z/ is derived. (This is the
 appropriate variant of Hochschild cohomology when the ring involved,
 here~\Fp, is not flat over the ground ring, here~\Z.) The ring
 $H_*(\Fp\Tensor_{\Z}\Fp\op)$ is an exterior algebra over $\Fp$ on a class
 of degree~$1$, so the Eilenberg-Moore spectral sequence computes
  \[
    H_*\Hom_{\Fp\Tensor_{\Z}\Fp\op}(\Z/p,\Z/p)\cong \Fp[u]
\]
 where the degree of~$u$ is $-2$. This immediately shows that the
 group \ref{Gluing} of gluing choices is trivial if $n$ is odd. If $n$
 is even, there are $p$ gluing choices, but $p-1$ of them are
 identified by the automorphisms of $\Z/p$ as an $\Fp$-bimodule. 
The conclusion is that
 if $n$ is even there are $p$ gluing choices, but that up to
 equivalence only two \DGAs/ emerge.

 By \ref{SimpleDuality} this also gives a classification of \DGAs/ of
 type $\PrOf\Fp n$ for $n\le-2$. 
\end{numbered}

We do not know how to classify \DGAs/ of type $\BrOf\Fp n$ for
$n\le-2$. 

\providecommand{\bysame}{\leavevmode\hbox to3em{\hrulefill}\thinspace}

\end{document}